\definecolor{mahogany}{cmyk}{0, 0.77, 0.87, 0}
\definecolor{salmon}{cmyk}{0, 0.53, 0.38, 0}
\definecolor{melon}{cmyk}{0, 0.46, 0.50, 0}
\definecolor{yellowgreen}{cmyk}{0.44, 0, 0.74, 0}
\definecolor{brickred}{cmyk}{0, 0.89, 0.94, 0.28}
\definecolor{OliveGreen}{cmyk}{0.64, 0, 0.95, 0.40}
\definecolor{RawSienna}{cmyk}{0, 0.72, 1.0, 0.45}
\definecolor{ZurichRed}{rgb}{1, 0, 0} 
\numberwithin{equation}{section}
\newtheorem{thm}{Theorem}[section]        
\newtheorem{cor}{Corollary}[section]
\newtheorem{lem}{Lemma}[section]
\newtheorem{prop}{Proposition}[section]
\newtheorem{rmk}{Remark}[section]
\newenvironment{•}{•}{•}
\newcommand{\R}{\mathbb{R}}                  
\newcommand{\Rd}{\R^d}               
\newcommand{\set}[1]{ \left\{#1\right\} }
\newcommand{\mysum}[3]{\sum\limits_{#1=#2}^{#3}}          
\newcommand{\E}[1]{\mathbb{E}\left[#1\right]}
\newcommand{\eid}{\,\,{\buildrel \mathcal{D} \over =}\,\,}   
\newcommand{\abs}[1]{\left|#1\right|}
\newcommand{\tgo}{t\downarrow 0}
\newcommand{\Prob} {\mathbb{P} }
\newcommand{\palp}{p_{t}^{(\alpha)}}
\newcommand{\Hset}[2]{\mathbb{H}_{#1,#2}^{(\alpha)}(t)}
\newcommand{\inalp}{ \frac{1}{2}} 
\newcommand{\ld}{\abs{\Omega}}
\newcommand{\pthesis}[1]{\left(#1\right)}
\newcommand{\alpr}{0<\alpha<2}
\newcommand{\bd}{\partial\Omega}
\newcommand{\myH}{\mathbb{H}}
\newcommand{\myP}{\mathcal{P}}
\newcommand{\myd}{\Omega}
\newcommand{\myp}{{\bf X}=\set{X_{s}}_{s\geq 0}}
\newcommand{\mypr}{{\bf X}}
\newcommand{\myet}{\tau_{\myd}^{(\alpha)}}
\newcommand{\myhc}{Q_{\myd}^{(\alpha)}(t)}
\newcommand{\tal}{t^{-\frac{1}{\alpha}}}
\newcommand{\ptal}{t^{\frac{1}{\alpha}}}
\begin{document}

\title[  Heat content ]{ On the one  dimensional  spectral Heat content  for stable processes}
\author{Luis Acu\~na Valverde}

\address{Department of Mathematics, Universidad de Costa Rica}
\email{guillemp22@yahoo.com}
\maketitle

\begin{abstract}
This paper provides the second term in  the small time asymptotic expansion of the spectral heat content of  a rotationally invariant $\alpha$--stable process, $0<\alpha \leq 2$, for the bounded interval $(a,b)$. The  small time behavior of the spectral heat content turns out to be linked to  the   distribution of the supremum and infimum processes.
\end{abstract}

\section{introduction}
Let $0<\alpha\leq 2$ and consider $\myp$ a rotationally  invariant  $\alpha$-stable process in $\Rd$ where $d\geq1$  and whose transition densities, denoted along the paper by  $p_t^{(\alpha)}(x,y)$, are known to satisfy the following  properties.
\begin{enumerate}
\item[$(i)$] $p_t^{(\alpha)}(x,y)$ is radial.  Namely, $\palp(x,y)=\palp(\abs{x-y})$.
\item[$(ii)$] Scaling property: $\palp(\abs{x-y})=
t^{-\frac{d}{\alpha}}p_1^{(\alpha)}(t^{-\frac{1}{\alpha}} \abs{x-y})$.
\item[$(iii)$]  $\palp(x,y)$ satisfies the following two sided estimates  for all $\alpr$. There exists a constant $c_{\alpha,d}>0$ such that
\begin{align}\label{tcomp}
c_{\alpha,d}^{-1}\min\set{t^{-d/\alpha},\frac{t}{\abs{x-y}^{d+\alpha}}}\leq
\palp(x-y)\leq c_{\alpha,d}\min\set{t^{-d/\alpha},\frac{t}{\abs{x-y}^{d+\alpha}}},
\end{align}
for all $x,y\in \R^d$ and $t>0$. See \cite{Chen1} for further details. 
\item[$(iv)$] According to \cite[Theorem 2.1]{Blum}, for all $\alpr$, we have
\begin{align}\label{tlim}
\lim_{\tgo}\frac{\palp(\abs{x-y})}{t}=\frac{A_{\alpha,d}}{\abs{x-y}^{d+\alpha}},
\end{align}
for all $x\neq y$, where 
\begin{equation}\label{Adef}
A_{\alpha,d}=\alpha \, 2^{\alpha-1}\, \pi^{-1-\frac{d}{2}}\,\sin\pthesis{\frac{\pi\alpha}{2}}\,\Gamma\pthesis{\frac{d+\alpha}{2}}\,\Gamma\pthesis{\frac{\alpha}{2}}.
\end{equation} 

\end{enumerate} 

Before continuing,  we introduce the following standard notation. $\mathbb{E}^x$ and $\Prob^x$ will denote the expectation 
and  probability of any  process started at $x$, respectively. Also  for  simplicity, 
we will connote $\Prob=\Prob^{0}$, $\mathbb{E}=\mathbb{E}^{0}$  and write 
$Z\eid Y$ for two random variables $Z,Y$ with values in $\Rd$ to mean that they are equal 
in distribution or have the same law.

Let us at this point  establish the following convention which is needed  to provide some references and motivation. When $d=1$, $\Omega$ will stand for an interval $(a,b)$ with finite length $b-a$ denoted by $\ld$. For $d>1$, $\myd$ will be a bounded domain with smooth boundary $\bd$ and volume denoted by $\ld$. In addition, we set
\begin{align}\label{dersa}
\abs{\bd} =
  \begin{cases}
    \#\set{x\in \R: x\in \bd},\quad \text{if } d =1,\\
     \quad \text{surface area of $\myd$,}  \,\,\,\,\,\,\text{ if}\,\,\, d\geq 2.\\
  \end{cases}
\end{align}

Given $\myd\subseteq\Rd$ as above, we consider for $\myp$ the first exit time upon $\myd$. That is,
\begin{align*}
\myet=\inf\set{s \geq 0: X_s \in \myd^c }.
\end{align*} 
The purpose of the paper is to investigate the small time behavior of the following  function, which is called {\it the spectral heat content over $\myd$} and it is given by
\begin{align*}
\myhc=\int_{\myd}dx\,\Prob^x\pthesis{\myet>t},\,\, t>0,
\end{align*}
when $\Omega=(a,b)$ with $\ld=b-a<\infty$. Of course, $\myhc$ makes sense even in the higher dimensional setting when $\Omega$ is taken according to our convention about $\myd$.

It is worth noting that the \textit{spectral heat content} of $\Omega$ takes an alternative form. In fact,
\begin{align}\label{shc}
{Q}_{\Omega}^{(\alpha)}(t)=\int_{\Omega}dx\,\int_{\Omega}dy\,\,p^{\Omega,\alpha}_t(x,y),
\end{align}
where $p^{\Omega,\alpha}_t(x,y)$ is the transition density for the stable process killed upon exiting $\Omega$.  An explicit expression is given by 
\begin{equation}\label{tran.den.dom}
p^{\Omega,\alpha}_t(x,y)=\palp(x,y)\,\,\mathbb{P}\left(\tau_{\Omega}^{(\alpha)}>t \,\,|\,\, X_0=x,\,\,X_t=y\right).
\end{equation}

The name {\it spectral heat content}  given to $Q_{\Omega}^{(\alpha)}(t)$ comes from the fact that $p^{\Omega,\alpha}_t(x,y)$ can be written in terms of the eigenvalues and eigenfunctions of the underlying domain 
 $\Omega$. That is, it is known (see \cite{Davies}) that there exists an orthonormal basis of eigenfunctions $\{\phi_n \}_{n\in \mathbb{N}}$ for $L^2(\Omega)$  with  corresponding eigenvalues $\{\lambda_n \}_{n \in \mathbb{N}}$
satisfying $0 < \lambda_1 < \lambda_2 \leq \lambda_3 \leq . . .$
and  $\lambda_n \rightarrow \infty$ as $n \rightarrow \infty$ such that
\begin{equation*}
p^{\Omega,\alpha}_t(x,y)=\mysum{n}{1}{\infty}e^{-t\lambda_n}\,\phi_{n}(x)\,\phi_n(y).
\end{equation*}
Notice that due to \eqref{shc} and the last equality, we obtain an expression for 
$Q_{\Omega}^{(\alpha)}(t)$ involving both the spectrum $\set{ \lambda_n}_{ n\in \mathbb{N}}$ and  eigenfunctions $\set{\phi_{n}}_{n\in \mathbb{N}}$. That is,
$$Q_{\Omega}^{(\alpha)}(t)=\mysum{n}{1}{\infty}e^{-t\lambda_{n}}\pthesis{\int_{\Omega}dx\,\phi_n(x)}^2.$$

The interest in investigating the spectral heat content is to  obtain   geometry features of the underlying set $\myd$ such as volume, surface area, mean curvature, ect. The spectral heat content has been deeply studied so far for the Brownian motion  ($\alpha=2$) and we refer the interested reader to 
\cite{van2, van4, vanden3, vandenDryKap} for further details concerning asymptotics of the spectral heat content corresponding to the Brownian motion for  different kind of domains.  As for the cases $0<\alpha<2$, the author of this paper in \cite{Acu0} has provided estimates of $\myhc$ involving the volume of $\myd$, its surface  area and the fractional $\alpha$--perimeter (see \eqref{per} below). In fact, the following conjecture about 
the asymptotic expansion of  $\myhc$ is provided in  \cite{Acu0}.
\bigskip

{\it {\bf Conjecture:}} Let $\Omega$ be an interval  with finite length ($d=1$) or a bounded domain with smooth boundary  ($d\geq 2$). Then,  
\begin{enumerate} 
\item[$(i)$] for $1<\alpha<2$, there exists $C_{d,\alpha}>0$ such that
\begin{align*}
Q_{\Omega}^{(\alpha)}(t)&=\abs{\Omega}-C_{d,\alpha}\,\abs{\bd} \, t^{\frac{1}{\alpha}} +\mathcal{O}(t),\,\, \tgo,
\end{align*}

\item[$(ii)$] for $\alpha=1$, there exists $C_{d}>0$ such that
\begin{align*}
\hspace{7mm}Q_{\Omega}^{(1)}(t)&=\abs{\Omega}-C_{d}\,
\abs{\bd}  \, t\,\ln\pthesis{\frac{1}{t}} +\mathcal{O}(t), \,\, \tgo,
\end{align*}

\item[$(iii)$] for $0<\alpha<1$, there exists $\gamma_{d,\alpha}>0$ such that
\begin{align*}
Q_{\Omega}^{(\alpha)}(t)&=\abs{\Omega}-\gamma_{d,\alpha}\myP_{\alpha}(\Omega) \, t +o(t),\,\, \tgo,
\end{align*}
where 
\begin{align}\label{per}
\myP_{\alpha}(\Omega)=\int_{\myd}\int_{\myd^c}\frac{dxdy}
{|x-y|^{d+\alpha}}
\end{align}
is known as {\it the fractional $\alpha$--perimeter}. (We refer the reader to \cite{Frank,FracP} for further details.)
\end{enumerate}

The study of $\myhc$ when $\myd$ is an interval with finite length is motivated by the aforementioned  conjecture and our main result provides the exact constants $C_{1,\alpha}$, $C_1$ and $\gamma_{1,\alpha}$.  However, we do not give any estimates concerning the remainders.

The preceding conjecture was proved bearing in mind the small time behavior of the  function 
\begin{align}\label{hc}
\myH_{\Omega,\Omega^c}^{(\alpha)}(t)=\int_{\Omega}dx\int_{\Omega^c}dy \,\palp(x,y)=\ld-\int_{\Omega}dx\int_{\Omega}dy \,\palp(x,y),
\end{align}
since for small values of $t$, it is expected that the heat kernels
$\palp(x,y)$ and $p^{\Omega,\alpha}_t(x,y)$ behave alike.

The function $\myH_{\Omega,\Omega^c}^{(\alpha)}(t)$ will play an important role in the proof of the main result of this paper, so that we need the following two facts about $\myH_{\Omega,\Omega^c}^{(\alpha)}(t)$ whose proofs can be found in \cite{Acu0}. First, by appealing to \eqref{tran.den.dom}, we see that  $p^{\Omega,\alpha}_t(x,y)\leq \palp(x,y)$. Thus, we easily obtain
\begin{align}\label{ineq1}
\myH_{\Omega,\Omega^c}^{(\alpha)}(t)\leq |\Omega|-\myhc.
\end{align}
Secondly, according to Theorem 1.1 in \cite{Acu0}, we have for
 $\Omega=(a,b)$ with $\ld=b-a<\infty$ the following.
\begin{enumerate}
\item[$(i)$] For $1<\alpha\leq 2$,
\begin{align}\label{case1}
\lim\limits_{\tgo}\frac{\Hset{\Omega}{\Omega^c}}{\ptal}=\frac{2}{\pi}\,\Gamma\pthesis{1-\frac{1}{\alpha}}.
\end{align}
\item[$(ii)$] For $\alpha=1$, the following  equality holds.
\begin{align}\label{case2}
\lim\limits_{\tgo}\frac{\myH_{\Omega, \Omega^c}^{(1)}(t)}{t\ln\pthesis{\frac{1}{t}}}=\frac{2}{\pi}.
\end{align}
\item[$(iii)$] Let $0<\alpha<1$. Then,
\begin{align}\label{case3}
\lim\limits_{\tgo}\frac{\myH_{\Omega, \Omega^c}^{(\alpha)}(t)}{t}=A_{\alpha,1}\myP_{\alpha}(\Omega),
\end{align} 
with $A_{\alpha,1}$ and $\myP_{\alpha}(\Omega)$ as given in \eqref{Adef} and \eqref{per}, respectively.
\end{enumerate}

Before proceeding with the statement of our main result, we introduce  two stochastic processes associated with the one dimensional $\alpha$-stable process ${\bf X}$. For $t\geq0$ and ${\bf X}$ started from $0$, we set
\begin{align}\label{infsupproc}
\overline{X}_{t}&=\sup\set{X_{s}:0\leq s\leq t},\\
\nonumber
\underline{X}_{\,t}&=\,\inf\set{X_{s}:0\leq s\leq t}.
\end{align}
The stochastic processes $\overline{\bf X}=\set{\overline{X}_{s}}_{s\geq 0}$  and
$\underline{\bf X}=\set{\underline{X}_{s}}_{s\geq 0}$ are called the supremum and infimum process, respectively.  The distribution of the  process $\overline{\bf X}$ has been widely investigated by many authors (see for example \cite{Chaumont}, \cite{Chaumont1} and \cite{Darling}) because of its connections with fluctuation and excursion theory  and major interest in
stochastic modelling, such as queuing and risk theories.

The main result of this paper is the following.

\begin{thm}\label{mymth}
Consider $\Omega=(a,b)$ with  $\ld=b-a<\infty$. 
\begin{enumerate}
\item[$a)$] Let $1<\alpha\leq 2$. Then, we have
$$\lim\limits_{\tgo}\frac{\abs{\myd}-Q^{(\alpha)}_{\myd}(t)}{\ptal}=2\,\, \E{
\overline{X}_1}.$$
\item[$b)$] For $\alpha=1$, we obtain
$$\lim\limits_{\tgo}\frac{\ld-Q^{(1)}_{\Omega}(t)}{t\ln\pthesis{\frac{1}{t}}}=\frac{2}{\pi}.$$
\item[$c)$] Let $0<\alpha<1$. Then,
$$\lim\limits_{\tgo}\frac{\ld-Q^{(\alpha)}_{\Omega}(t)}{t}=A_{\alpha,1}\myP_{\alpha}(\Omega),$$
with $A_{\alpha,1}$ and $\myP_{\alpha}(\Omega)$ as given in \eqref{Adef} and \eqref{per}, respectively.
\end{enumerate}
\end{thm}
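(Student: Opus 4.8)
The plan is to rewrite $\ld-\myhc$ in terms of the supremum and infimum of $\mypr$, peel off a lower order ``exit through both endpoints'' contribution, and then read off the asymptotics of the remaining main term by scaling. Translating the process started at $x$ to $x+\mypr$ with $\mypr$ started at $0$, the exit event is $\Prob^{x}\pthesis{\myet\leq t}=\Prob\pthesis{\overline{X}_{t}\geq b-x\ \text{ or }\ \underline{X}_{\,t}\leq a-x}$, so that
$$\ld-\myhc=\int_{a}^{b}\Prob^{x}\pthesis{\myet\leq t}\,dx=\int_{a}^{b}\Prob\pthesis{\overline{X}_{t}\geq b-x\ \text{ or }\ \underline{X}_{\,t}\leq a-x}\,dx.$$
By inclusion--exclusion, the substitutions $u=b-x$ and $v=x-a$, and the symmetry $-\underline{X}_{\,t}\eid\overline{X}_{t}$ of the one dimensional stable process (which follows from $-\mypr\eid\mypr$), this becomes
$$\ld-\myhc=2\int_{0}^{\ld}\Prob\pthesis{\overline{X}_{t}\geq u}\,du-C(t),\qquad C(t):=\int_{a}^{b}\Prob\pthesis{\overline{X}_{t}\geq b-x,\ \underline{X}_{\,t}\leq a-x}\,dx\geq 0.$$

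Next I would show that $C(t)$ is of lower order than the main term, concretely $C(t)=O(t)\int_{0}^{\ld}\Prob(\overline{X}_{t}\geq u)\,du$. Fix $x\in(a,b)$ and apply the strong Markov property at the first exit time $\sigma$ of the interval $(a-x,b-x)$, which contains $0$: on the event that the process leaves through the top, i.e. $X_{\sigma}\geq b-x$, in order to later reach below $a-x$ it must undergo a downward displacement of at least $(b-x)-(a-x)=\ld$ within the remaining time $t-\sigma\leq t$, and this has probability at most $\Prob(-\underline{X}_{t}\geq\ld)=\Prob(\overline{X}_{t}\geq\ld)=O(t)$, uniformly in the starting point, by \eqref{tcomp} together with a maximal inequality; the exit through the bottom is symmetric. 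Hence $\Prob(\overline{X}_{t}\geq b-x,\ \underline{X}_{\,t}\leq a-x)\leq c\,t\,\Prob^{x}(\myet\leq t)$, and integrating in $x$ and using the first display bounds $C(t)$. Combining this with the scaling identity $\overline{X}_{t}\eid\ptal\,\overline{X}_{1}$, which turns the main term into $2\ptal\int_{0}^{\ld\tal}\Prob(\overline{X}_{1}\geq w)\,dw$, reduces the whole problem to the behaviour as $\tgo$ of $\int_{0}^{\ld\tal}\Prob(\overline{X}_{1}\geq w)\,dw$.

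For $1<\alpha\leq 2$ the tail of $\overline{X}_{1}$ decays like $w^{-\alpha}$, so $\E{\overline{X}_{1}}=\int_{0}^{\infty}\Prob(\overline{X}_{1}\geq w)\,dw<\infty$; since $\ld\tal\to\infty$ as $\tgo$, monotone convergence gives $\int_{0}^{\ld\tal}\Prob(\overline{X}_{1}\geq w)\,dw\to\E{\overline{X}_{1}}$, and together with the two previous steps this proves part $a)$. For $0<\alpha\leq 1$ the integral diverges, and the crucial fact is that $\overline{X}_{1}$ and $X_{1}$ have the same tail: by the single--big--jump principle for a L\'evy process whose Lévy measure is regularly varying, $\Prob(\overline{X}_{1}\geq w)\sim\Prob(X_{1}>w)$ as $w\to\infty$, whence $\int_{0}^{M}\Prob(\overline{X}_{1}\geq w)\,dw\sim\int_{0}^{M}\Prob(X_{1}>w)\,dw$ as $M\to\infty$. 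Undoing the scaling, $\int_{0}^{\ld}\Prob(\overline{X}_{t}\geq u)\,du\sim\int_{0}^{\ld}\Prob(X_{t}>u)\,du$ as $\tgo$, and since $\Hset{\Omega}{\Omega^c}=\int_{\Omega}\int_{\Omega^c}\palp(x,y)\,dy\,dx=2\int_{0}^{\ld}\Prob(X_{t}>u)\,du$ we obtain $\ld-\myhc\sim\Hset{\Omega}{\Omega^c}$; parts $b)$ and $c)$ then follow from \eqref{case2} and \eqref{case3} respectively.

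The reduction in the first step and the scaling are routine. I expect the main obstacle to be the uniform $O(t)$ bound on the double--exit probability used to discard $C(t)$: it combines the strong Markov property with the two--sided estimate \eqref{tcomp}, but must hold uniformly in the starting point $x\in(a,b)$, including $x$ close to an endpoint where $\Prob^{x}(\myet\leq t)$ is not small. The other non-elementary input is, for $0<\alpha\leq 1$, the tail equivalence $\Prob(\overline{X}_{1}\geq w)\sim\Prob(X_{1}>w)$; this is standard for stable (indeed, regularly varying) L\'evy processes --- both the running supremum up to time one and the value $X_{1}$ are governed by a single large jump --- and it is exactly what upgrades the known asymptotics \eqref{case2}, \eqref{case3} for $\Hset{\Omega}{\Omega^c}$ to the claimed asymptotics for $\ld-\myhc$. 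For $1<\alpha\leq 2$ that step is replaced by the finiteness of $\E{\overline{X}_{1}}$ and monotone convergence, which is why the answer there sees the full first moment of $\overline{X}_{1}$ rather than merely its tail.
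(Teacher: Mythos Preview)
Your decomposition is exactly the one in the paper (Proposition \ref{ml}): after inclusion--exclusion and scaling one has $\ld-\myhc=2\,\ptal L^{(\alpha)}(t)-\ptal r^{(\alpha)}(t)$, with your $C(t)=\ptal r^{(\alpha)}(t)$. For $1<\alpha\le 2$ both you and the paper finish part $a)$ by monotone convergence of $L^{(\alpha)}(t)$ to $\E{\overline{X}_1}$; the only difference is that you bound $C(t)$ via the strong Markov property (getting $C(t)\le \Prob(\overline{X}_t\ge\ld)\cdot(\ld-\myhc)=O(t)(\ld-\myhc)$), whereas the paper simply shows $r^{(\alpha)}(t)\to 0$ by dominated convergence. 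Your bound is sharper and works for all $\alpha$ at once, which is a genuine advantage.

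For $0<\alpha\le 1$ the routes diverge slightly. You keep your $C(t)=O(t)(\ld-\myhc)$ estimate to obtain $\ld-\myhc\sim 2\int_0^{\ld}\Prob(\overline{X}_t\ge u)\,du$, and then use the tail equivalence $\Prob(\overline{X}_1\ge w)\sim\Prob(X_1>w)$ (Bertoin, \cite[p.~221]{Ber}) to turn this into $\Hset{\Omega}{\Omega^c}$ via your correct identity $\Hset{\Omega}{\Omega^c}=2\int_0^{\ld}\Prob(X_t>u)\,du$. The paper instead never estimates $r^{(\alpha)}(t)$ in this range: it uses the a priori lower bound $\Hset{\Omega}{\Omega^c}\le\ld-\myhc$ (coming from $p_t^{\Omega,\alpha}\le\palp$) and the trivial upper bound $\ld-\myhc\le 2\,\ptal L^{(\alpha)}(t)$, and then computes the limit of $L^{(\alpha)}(t)$ directly by L'H\^opital together with Darling's explicit supremum density for $\alpha=1$ and Bertoin's tail equivalence for $0<\alpha<1$, squeezing to the answer. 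Both arguments rely on the same external input (the Bertoin tail equivalence) and lead to the same conclusion; your version is more uniform across $\alpha$, while the paper's squeeze avoids controlling the double--exit term altogether.
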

We remark that the main idea in the proof of Theorem \ref{mymth} consists in expressing $\myhc$ as  quantities involving   the distribution of the supremum and infimum processes  related to the $\alpha$-stable process ${\bf X}$. 
The idea of employing the distribution of the supremum comes from the paper of van den Berg and le Gall \cite{van2}, where they use in the case of the Brownian motion ($\alpha=2$) the supremum  process, the  unique a.s. time $\tau_{t}$ satisfying $X_{\tau_t}=\overline{X}_{t}$ and the independence among coordinates to prove  
for smooth domains $\myd \subset \R^d$, $d\geq 2$ that
\begin{equation}\label{VandenberHeatContBM}
Q_{\myd}^{(2)}(t)=|\myd|-\frac{2}{\sqrt{\pi}}
\abs{\bd}t^{1/2}+\left(2^{-1}(d-1)\int_{\partial \myd}\mathcal{M}(s)ds\right) t +\mathcal{O}(t^{3 /2}),  
\end{equation}
as $t\downarrow 0$.  Here, $\mathcal{M}(s)$ represents the mean curvature at the point $s\in \partial \myd$.  

We point out that when $\alpha=2$ and using that for an interval
$\abs{\bd}=2$ (see \eqref{dersa}), we obtain by Theorem \ref{mymth}, the one dimensional analogue to \eqref{VandenberHeatContBM}. That is,
$$\lim\limits_{\tgo}\frac{\abs{\myd}-Q^{(2)}_{\myd}(t)}{t^{\frac{1}{2}}}=\frac{2}{\sqrt{\pi}}\abs{\bd},$$
since, according to \eqref{Bmcal} below, we have $\E{\overline{X}_1}=\frac{2}{\sqrt{\pi}}$.

The rest of the paper is organized as follows. In \S \ref{sec:T1}, we establish properties concerning the distribution of the supremum and infimum processes.  In \S \ref{sec:T2}, we  rewrite $\myhc$ in terms of integrals involving the distribution of $\bf \overline{X}$ and $\bf \underline{X}.$ In \S \ref{sec:T4}, we provide the proof of Theorem \ref{mymth}, where the proof of a  crucial proposition is postponed until  \S \ref{sec:T5}.

\section{properties of the supremum and infimum processes}\label{sec:T1} 

As we have mentioned in the introduction, the proof of our main result depends on the properties of the distribution of the supremum and infimum processes. For this reason, this section is dedicated to establishing properties about $\overline{X}$ and $\underline{X}$ only when $0<\alpha<2$ since for $\alpha=2$ these properties are already well known.

To begin with, consider $A\subseteq \R$ a Borel-set and the first hitting time for $A$ defined by
$$T_A=\inf\set{s>0:X_s\in A}.$$
We recall that $x$ is regular for $A$ if
$\Prob^{x}\pthesis{T_A=0}=1.$
That is, the process starting from $x$ meets $A$ for arbitrarily small strictly positive times with probability one.

Next, due to the fact that an $\alpha$-stable process $\bf X$ satisfies by symmetry that $\Prob\pthesis{X_t>0}=\frac{1}{2}$, we have 
$$\int_{0}^
{1}dt\,\frac{1}{t}\Prob\pthesis{X_t>0}=\infty.$$ 
The divergence of the last integral guarantees by Theorem 6.5 in \cite{Kyp}  that  $0$ is regular for $(0,\infty)$ which in turn  implies that $\overline{X}_t> 0$ a.s. due to the fact that the process $\bf X$ starting from $0$ will hit  $(0,\infty)$ for arbitrarily small strictly positive times with probability one. Since $ \overline{X}_{t}\eid
-\underline{X}_{\,t}$, we also obtain $\underline{X}_{\,t}< 0$ a.s.

The next proposition will be very useful in the following. Roughly speaking, it says  that the distribution of the random variables  $\overline{X}_t$ and $X_t$ are comparable. 

\begin{prop}\label{prop1}
For every $u>0$ and $t>0$, we have 
\begin{align*}
\Prob\pthesis{u\leq X_t}\leq
\Prob\pthesis{u\leq \overline{X}_t} \leq
2\,\Prob\pthesis{u\leq X_t}.
\end{align*}
\end{prop}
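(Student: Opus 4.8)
The plan is to establish the two inequalities separately. The left inequality is essentially immediate: since $X_t \leq \overline{X}_t$ pointwise (the running supremum dominates the terminal value), the event $\{u \leq X_t\}$ is contained in $\{u \leq \overline{X}_t\}$, and monotonicity of probability gives $\Prob(u \leq X_t) \leq \Prob(u \leq \overline{X}_t)$.

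For the right inequality, the natural tool is a reflection-type argument adapted to the $\alpha$-stable setting. I would introduce the first passage time $T_u = \inf\{s \geq 0 : X_s \geq u\}$ (or $X_s > u$; since $0$ is regular for $(0,\infty)$, and by spatial homogeneity $u$ is regular for $[u,\infty)$, the distinction is immaterial). Then $\{u \leq \overline{X}_t\} = \{T_u \leq t\}$ up to a null set. On this event, condition on $\mathcal{F}_{T_u}$ and apply the strong Markov property at time $T_u$: the post-$T_u$ increment $X_t - X_{T_u}$ is, conditionally, distributed as $X_{t-s}$ started afresh under $\Prob^0$ with $s = T_u$, and by symmetry of the stable process it has a symmetric distribution, so it is nonnegative with conditional probability at least $\tfrac12$. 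Since the process is (quasi-)left-continuous and has no negative jumps issue to worry about here — actually for $\alpha$-stable with jumps one must be slightly careful that $X_{T_u}$ may overshoot $u$, but overshoot only helps: $X_{T_u} \geq u$ — on $\{T_u \leq t\}$ we have
\begin{align*}
\Prob\pthesis{u \leq X_t} &\geq \Prob\pthesis{T_u \leq t,\ X_t \geq X_{T_u}} \\
&= \E{\mathbbm{1}_{\{T_u \leq t\}}\,\Prob\pthesis{X_t \geq X_{T_u} \mid \mathcal{F}_{T_u}}} \\
&\geq \tfrac12\,\Prob\pthesis{T_u \leq t} = \tfrac12\,\Prob\pthesis{u \leq \overline{X}_t}.
\end{align*}
Rearranging yields $\Prob(u \leq \overline{X}_t) \leq 2\,\Prob(u \leq X_t)$.

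The step I expect to require the most care is the measurability/regularity justification that $\Prob(X_t \geq X_{T_u} \mid \mathcal{F}_{T_u}) \geq \tfrac12$ on $\{T_u \leq t\}$: one needs $T_u$ to be a stopping time with $\Prob(T_u < \infty \text{ on the event}) $ handled correctly, the strong Markov property applied at $T_u$, and the fact that the conditional law of $X_t - X_{T_u}$ given $\{T_u = s\}$ is the law of a symmetric stable variable at time $t-s \geq 0$, hence symmetric about $0$ and therefore assigns mass at least $\tfrac12$ to $[0,\infty)$ (with equality when $t = s$, where the increment is degenerate at $0$, still giving probability $1 \geq \tfrac12$). The only subtlety specific to $0<\alpha<2$ versus the Brownian case is the possible overshoot $X_{T_u} > u$, but as noted this goes in the favorable direction since $\{X_t \geq X_{T_u}\} \subseteq \{X_t \geq u\}$. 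I would also remark that the argument is uniform in $t > 0$ and $u > 0$, which is all that is claimed.
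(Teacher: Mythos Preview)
Your proof is correct and follows essentially the same route as the paper: the trivial inclusion for the lower bound, and for the upper bound the first-passage time $T_u$, the identification $\{T_u\le t\}=\{u\le \overline{X}_t\}$, the overshoot inequality $X_{T_u}\ge u$, and the strong Markov property together with symmetry of the post-$T_u$ increment. If anything, your formulation via conditioning on $\mathcal{F}_{T_u}$ and the explicit treatment of the degenerate case $T_u=t$ is slightly more careful than the paper's, which conditions on the event $\{T_u\le t\}$ and asserts equality $=\tfrac12$ directly.
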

\begin{proof}
We start by noticing that 
$\Prob\pthesis{u \leq X_t  }=
\Prob\pthesis{u\leq \overline{X}_t,u\leq X_t  }\leq \Prob\pthesis{u\leq \overline{X}_t}$. It remains to show that
$$\Prob\pthesis{u\leq \overline{X}_t} \leq
2\,\Prob\pthesis{u\leq X_t}.$$
Denote $T_{u} = T_{(u,\infty)}= \inf\set{s \geq 0 : X_s \geq u}$, the first passage time over $u$ and observe that
$$\set{T_u\leq t}=\set{u\leq \overline{X}_t}.$$ Therefore, by  a conditioning argument, we arrive at
\begin{align}\label{pro1}
\Prob\pthesis{u \leq X_t  }&=
\Prob\pthesis{u\leq \overline{X}_t,u\leq X_t  }\\ \nonumber
&=\Prob\pthesis{u\leq X_t \big{|}u\leq \overline{X}_t}
\Prob\pthesis{u\leq \overline{X}_t}\\ \nonumber
&=\Prob\pthesis{u\leq X_t \big{|}T_u\leq t}
\Prob\pthesis{u\leq \overline{X}_t}\\ \nonumber
&=\Prob\pthesis{u-X_{T_u}\leq X_{(t-T_u)+T_u}-X_{T_u} \big{|}T_u\leq t}
\Prob\pthesis{u\leq \overline{X}_t}.
\end{align}
Now, by   the definition of $T_{u}$, we have $u-X_{T_u}\leq 0$. Thus,
\begin{align}\label{pro2}
\Prob\pthesis{u-X_{T_u}\leq X_{(t-T_u)+T_u}-X_{T_u} \big{|}T_u\leq t}\geq \Prob\pthesis{0\leq X_{(t-T_u)+T_u}-X_{T_u} \big{|}T_u\leq t}.
\end{align}
Finally, by the strong Markov property and the symmetric property of the $\alpha$-stable process, we obtain
\begin{align}\label{pro3}
\Prob\pthesis{0\leq X_{(t-T_u)+T_u}-X_{T_u} \big{|}T_u\leq t}=\frac{1}{2}.
\end{align} 
Hence, the desired result is proved by combining \eqref{pro1}, \eqref{pro2} and \eqref{pro3}.
\end{proof}
 
 The following lemma  provides estimates about the tail behavior of 
 the $\alpha$-stable process $\bf X$ and its proof  is omitted because it consists in a elementary integration problem.
\begin{lem}\label{lem2}
Let $\psi(z)=\min\set{1,\abs{z}^{-(1+\alpha)}}$ with $0<\alpha<2$. Then, for all $u>0$, we have
\begin{align*}
\int_{u}^{\infty}dz\,\psi(z)=
\mathbbm{1}_{(0,1)}(u)\pthesis{1+\frac{1}{\alpha}-u}+\mathbbm{1}_{[1,\infty)}(u)\pthesis{\frac{u^{-\alpha}}{\alpha}}.
\end{align*}
In particular, due to \eqref{tcomp}, it follows  for $t<u^{\alpha}$ that
\begin{align}\label{Probest}
c_{\alpha,1}^{-1}\frac{t}{u^{\alpha}\alpha}\leq \Prob\pthesis{u\leq X_t}=\Prob\pthesis{\frac{u}{\ptal}\leq X_1}\leq c_{\alpha,1}\frac{t}{u^{\alpha}\alpha}.
\end{align}
\end{lem}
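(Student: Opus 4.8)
The plan is to treat the two assertions separately: the first is a bare computation, and the second is a change of variables that reduces to the first.

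For the explicit evaluation of $\int_{u}^{\infty}\psi(z)\,dz$, I would split the integral at the point $z=1$ where $\psi$ switches between its two branches. If $u\geq 1$, then $\psi(z)=z^{-(1+\alpha)}$ throughout the range of integration, and since $\alpha>0$ the antiderivative $-z^{-\alpha}/\alpha$ gives $\int_{u}^{\infty}z^{-(1+\alpha)}\,dz=u^{-\alpha}/\alpha$, which matches the $\mathbbm{1}_{[1,\infty)}(u)$ term. If $0<u<1$, I would write $\int_{u}^{\infty}\psi(z)\,dz=\int_{u}^{1}1\,dz+\int_{1}^{\infty}z^{-(1+\alpha)}\,dz=(1-u)+1/\alpha=1+\tfrac{1}{\alpha}-u$, which is the $\mathbbm{1}_{(0,1)}(u)$ term. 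This settles the first formula.

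For \eqref{Probest} I would first record that, since $\mathbf{X}$ starts at $0$, the tail probability is $\Prob\pthesis{u\leq X_t}=\int_{u}^{\infty}\palp(z)\,dz$, and then insert the two-sided bound \eqref{tcomp} with $d=1$, so the problem becomes estimating $\int_{u}^{\infty}\min\set{\tal,\,\tfrac{t}{z^{1+\alpha}}}\,dz$. The substitution $z=\ptal w$ converts the integrand into $\tal\,\psi(w)$ and $dz$ into $\ptal\,dw$, whence this integral equals $\int_{u\tal}^{\infty}\psi(w)\,dw$; alternatively one reaches the same point via the scaling property $(ii)$, which directly yields $\Prob\pthesis{u\leq X_t}=\Prob\pthesis{u\tal\leq X_1}$. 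Under the hypothesis $t<u^{\alpha}$ one has $u\tal>1$, so the first part of the lemma applies with lower limit $u\tal$ and gives $\int_{u\tal}^{\infty}\psi(w)\,dw=(u\tal)^{-\alpha}/\alpha=t\,u^{-\alpha}/\alpha$. Multiplying by $c_{\alpha,1}$ and by $c_{\alpha,1}^{-1}$ from \eqref{tcomp} produces both inequalities of \eqref{Probest}.

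I do not anticipate a genuine obstacle here; the only step needing a moment's attention is verifying that the constraint $t<u^{\alpha}$ is precisely what pushes the lower endpoint $u\tal$ into the range $[1,\infty)$, so that the pure power-law primitive (rather than the branch contributing the linear term $1+\tfrac{1}{\alpha}-u$) is the one that appears.
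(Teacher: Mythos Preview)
Your argument is correct and is precisely the elementary integration the paper alludes to when it omits the proof: splitting at $z=1$ for the first identity, then using scaling to reduce the tail probability to $\int_{u\tal}^{\infty}\psi(w)\,dw$ and invoking the hypothesis $t<u^{\alpha}$ to land in the $[1,\infty)$ branch. There is nothing to add.
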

As a consequence of combining  Lemma \ref{lem2} together with Proposition \ref{prop1}, we conclude the following result.
\begin{cor}\label{mycor}
\hspace{20mm}
\begin{enumerate}
\item[$(i)$] For all $t>0$,
\begin{align*}
\lim\limits_{u\rightarrow \infty}\Prob\pthesis{u\leq \overline{X}_t}=\lim\limits_{u\rightarrow \infty}\Prob\pthesis{u\leq X_t}=0.
\end{align*}
\item[$(ii)$] $$\int_{0}^{\infty}du\, \Prob(u\leq \overline{ X}_1)<\infty$$ if and only if 
$$\int_{0}^{\infty}du\, \Prob(u\leq  X_1)<\infty$$ if and only if $1<\alpha<2$.
\\
\hspace{20mm}
\item[$(iii)$] For all $t>0$,
\begin{align*}
\Prob(\overline{X}_t=\infty)=\Prob(\underline{X}_{\,t}=-\infty)=0.
\end{align*}
\end{enumerate}
\end{cor}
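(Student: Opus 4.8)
The plan is to prove the three parts of Corollary \ref{mycor} as direct consequences of Proposition \ref{prop1} and Lemma \ref{lem2}, essentially by a squeeze argument. For part $(i)$, I would fix $t>0$ and apply the sandwich
$\Prob(u\leq X_t)\leq \Prob(u\leq \overline{X}_t)\leq 2\Prob(u\leq X_t)$ from Proposition \ref{prop1}, so it suffices to show $\Prob(u\leq X_t)\to 0$ as $u\to\infty$. For $u>t^{1/\alpha}$ the estimate \eqref{Probest} gives $\Prob(u\leq X_t)\leq c_{\alpha,1}\,t/(u^\alpha\alpha)$, which tends to $0$; alternatively one simply notes $\Prob(u\leq X_t)\downarrow \Prob(X_t=\infty)=0$ by monotone limits since $X_t$ is a.s. finite. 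The first displayed chain of equalities then follows because both outer terms in the squeeze go to $0$.

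For part $(ii)$, I would again use Proposition \ref{prop1} to get
\[
\int_0^\infty \Prob(u\leq X_1)\,du \;\leq\; \int_0^\infty \Prob(u\leq \overline{X}_1)\,du \;\leq\; 2\int_0^\infty \Prob(u\leq X_1)\,du,
\]
so the two integrals are finite together. To decide when $\int_0^\infty \Prob(u\leq X_1)\,du<\infty$, I would split the integral at $u=1$; on $(0,1)$ the integrand is bounded by $1$ so that part is always finite, and on $[1,\infty)$ I would invoke \eqref{Probest} with $t=1$ (valid since $1<u^\alpha$ for $u>1$), which gives $\Prob(u\leq X_1)\asymp u^{-\alpha}$. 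Hence $\int_1^\infty \Prob(u\leq X_1)\,du<\infty$ iff $\alpha>1$, and combined with $\alpha<2$ this yields the claimed equivalence. (One can also phrase this via $\int_0^\infty \Prob(u\leq X_1)\,du=\E{\overline{X}_1^{+}}$ type identities, but the two-sided estimate is cleaner and self-contained.)

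For part $(iii)$, I would note that $\overline{X}_t$ is a nondecreasing limit: $\{\overline{X}_t\geq u\}\downarrow \{\overline{X}_t=\infty\}$ as $u\uparrow\infty$, so $\Prob(\overline{X}_t=\infty)=\lim_{u\to\infty}\Prob(u\leq \overline{X}_t)=0$ by part $(i)$. The statement $\Prob(\underline{X}_{\,t}=-\infty)=0$ then follows immediately from the distributional identity $\overline{X}_t \eid -\underline{X}_{\,t}$ recorded just before Proposition \ref{prop1}.

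I do not anticipate a genuine obstacle here: the corollary is a bookkeeping consequence of the two preceding results, and the only mildly delicate point is being careful that \eqref{Probest} is only applicable in the regime $t<u^\alpha$, which is exactly why the integral in $(ii)$ must be split at $u=1$ (with $t=1$) and why in $(i)$ one restricts to large $u$. Everything else — monotone convergence for the limits, Tonelli for interchanging the $u$-integral with probabilities, and the symmetry $\overline{X}_t\eid-\underline{X}_{\,t}$ — is routine.
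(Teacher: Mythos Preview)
Your proposal is correct and follows essentially the same approach as the paper: the paper treats parts $(i)$ and $(ii)$ as immediate consequences of Proposition~\ref{prop1} and Lemma~\ref{lem2} (writing only ``We only need to prove part $(iii)$''), and your argument simply spells out those details via the same squeeze and tail estimate, while your proof of $(iii)$ matches the paper's almost verbatim. The only cosmetic difference is that the paper takes the limit along integers $n\to\infty$ rather than $u\to\infty$, which is immaterial.
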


\begin{proof}
We only need to prove part $(iii)$. It is a basic fact in probability theory that $$\Prob(\overline{X}_t=\infty)=\lim\limits_{n\rightarrow\infty}\Prob\pthesis{n\leq \overline{X}_t}.$$ Hence, by Proposition \ref{prop1} and Lemma \ref{lem2}, we arrive by part $(i)$ at
 $$\Prob(\overline{X}_t=\infty)\leq 2\, \lim\limits_{n\rightarrow\infty}\Prob\pthesis{n\leq X_t}=0.$$
 
 On the other hand, using that $\overline{X}_{t}\eid-\underline{X}_{\,t}$, we have
 $\Prob(\overline{X}_t=\infty)=\Prob(\underline{X}_{\,t}=-\infty)$
and this completes the proof. 

\end{proof}

\section{spectral heat content in terms of  the supremum and infimum processes}\label{sec:T2}
We start this section by expressing  the spectral heat content $\myhc$ in a more convenient form. Henceforth,  $\myd=(a,b)$ with $\ld=b-a<\infty$ and $0<\alpha\leq 2$.

\begin{lem}\label{lem1} For $x \in \myd$ and $t>0$, we set
$\myd_t(x)=\pthesis{(x-b)\tal,(x-a)\tal}$. Then,
\begin{align*}
\Prob^x\pthesis{\myet>t}=
\Prob\pthesis{\tau^{(\alpha)}_{\myd_t(x)}>1}.
\end{align*}
\end{lem}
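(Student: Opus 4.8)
The plan is to use the scaling property of the $\alpha$-stable process to convert the exit-time event for $\Omega=(a,b)$ run for time $t$ into an exit-time event for a rescaled interval run for unit time. First I would fix $x\in\Omega$ and $t>0$ and recall that under $\Prob^x$, the process $\{X_s\}_{s\ge 0}$ satisfies $\{X_{ts}-x\}_{s\ge 0}\eid \{\ptal\,\widetilde X_s\}_{s\ge 0}$, where $\widetilde X$ is a copy of $\bf X$ started at $0$; this is the pathwise form of scaling property $(ii)$ together with spatial homogeneity of the stable process. Equivalently, the $\Prob^x$-law of the time-changed, recentred path $s\mapsto \ptal^{-1}(X_{ts}-x)$ is exactly $\Prob=\Prob^0$.

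The key computation is then to track the exit event through these two transformations. Starting from
$$
\myet=\inf\set{s\ge 0:X_s\notin(a,b)},
$$
I would write $\{\myet>t\}=\{X_s\in(a,b)\text{ for all }0\le s\le t\}$, then substitute $s=tr$ with $r\in[0,1]$ to get $\{X_{tr}\in(a,b)\text{ for all }0\le r\le 1\}$. Subtracting $x$ and dividing by $\ptal$ turns the condition $a<X_{tr}<b$ into
$$
(x-b)\tal < \ptal^{-1}(X_{tr}-x) < (x-a)\tal,
$$
for all $r\in[0,1]$; note $\tal=\ptal^{-1}$. Since the scaled path $r\mapsto \ptal^{-1}(X_{tr}-x)$ has $\Prob^0$-law equal to that of $\widetilde X$ on $[0,1]$, the probability of this event equals $\Prob(\widetilde X_r\in \myd_t(x)\text{ for all }0\le r\le 1)=\Prob(\tau^{(\alpha)}_{\myd_t(x)}>1)$, with $\myd_t(x)=((x-b)\tal,(x-a)\tal)$ exactly as defined. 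This gives the claimed identity.

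I do not anticipate a serious obstacle here; the only point requiring a little care is the justification that the \emph{first exit time} (a functional of the whole path on a time interval) transforms correctly under the joint space-time scaling, i.e. that $\tal\,\myet^{(\alpha),\text{path}}$ matches up with the unit-time exit of the rescaled path. This is handled by noting that scaling is a bijection on path space preserving the relevant events $\{$path stays in an interval throughout $[0,t]\}$, so one is really just applying the scaling identity in law to the indicator of a cylinder-type event and passing to the limit over finite partitions of $[0,1]$ if one wants to be fully rigorous about the uncountable intersection (here one uses right-continuity of paths, so the intersection over all $r\in[0,1]$ reduces to an intersection over a countable dense set together with the endpoints). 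Everything else is bookkeeping with the definition of $\myd_t(x)$ and the elementary fact $\ptal\cdot\tal=1$.
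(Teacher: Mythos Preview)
Your approach is essentially the same as the paper's: rewrite the exit event as a path event on $[0,t]$, time-change to $[0,1]$, and apply translation plus self-similarity of the stable process. One small slip: subtracting $x$ from $a<X_{tr}<b$ and dividing by $\ptal$ gives $(a-x)\tal<\ptal^{-1}(X_{tr}-x)<(b-x)\tal$, not the interval $\bigl((x-b)\tal,(x-a)\tal\bigr)$ you wrote; the paper lands on $\Omega_t(x)$ by additionally invoking the symmetry $\widetilde X\eid -\widetilde X$ (it writes $x-\ptal X_\ell$ rather than $x+\ptal X_\ell$), and your argument is completed the same way once you insert that reflection step.
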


\begin{proof} The proof is based on the scaling property $X_{t\ell}\eid t^{\frac{1}{\alpha}}X_{\ell}$ and  the translation property of the rotationally invariant $\alpha$-stable process.
\begin{align*}
\Prob^x\pthesis{\myet>t}&=\Prob^x\pthesis{a\leq X_s \leq b, \forall \,0\leq s\leq t}\\  
&= \Prob\pthesis{a\leq x-\ptal X_{\ell} \leq b, \forall \,0\leq \ell \leq 1} \\  
&= \Prob\pthesis{ (x-b)\tal \leq  X_{\ell} \leq (x-a)\tal, \forall \,0\leq \ell \leq 1}\\ 
&=\Prob\pthesis{\tau^{(\alpha)}_{\myd_t(x)}>1}.
\end{align*}
\end{proof}

The upcoming proposition allows us to decompose $\myhc$ as a sum involving  the distribution of the supremum and infimum processes.

\begin{prop} \label{ml}
For every  $t>0$, we have
\begin{align*} 
\myhc=\ld -2\, \ptal \int_{0}^{\ld \tal}du\,
\Prob\pthesis{u\leq \overline{X}_{1}} +\ptal r^{(\alpha)}(t),
\end{align*}
where
$$r^{(\alpha)}(t)=\int_{0}^{\ld \tal}du\,
\Prob\pthesis{u\leq \overline{X}_{1},\,\,\underline{X}_{\,1}\leq u-\ld\, \tal}.$$
\end{prop}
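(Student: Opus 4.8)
The plan is to start from Lemma \ref{lem1}, which gives $\Prob^x(\myet>t)=\Prob(\tau^{(\alpha)}_{\myd_t(x)}>1)$ with $\myd_t(x)=((x-b)\tal,(x-a)\tal)$, and integrate over $x\in\myd$. First I would make the substitution that turns the $x$-integral into an integral over the position of the left endpoint of $\myd_t(x)$: writing $v=(x-a)\tal$ as $x$ ranges over $(a,b)$, we get $dx=\ptal\,dv$ and $v$ ranges over $(0,\ld\tal)$, while the interval $\myd_t(x)$ becomes $(v-\ld\tal,v)$. Thus
$$\myhc=\ptal\int_{0}^{\ld\tal}dv\,\Prob\pthesis{\tau^{(\alpha)}_{(v-\ld\tal,\,v)}>1}.$$
The event $\{\tau^{(\alpha)}_{(v-\ld\tal,v)}>1\}$ for the process started at $0$ is exactly $\{v-\ld\tal < \underline{X}_{\,1}\ \text{and}\ \overline{X}_1 < v\}$, i.e. the process stays strictly inside the interval up to time $1$. (Here I use that $\overline{X}_1>0>\underline{X}_{\,1}$ a.s., established just before Proposition \ref{prop1}, so the interval genuinely contains $0$ and these are the right bounds; one should be slightly careful about whether the exit time being $>1$ corresponds to strict or non-strict inequalities, but since $\overline{X}_1$ and $\underline{X}_{\,1}$ have no atoms away from the relevant points this will not matter under the integral.)

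Next I would rewrite the probability of staying inside using inclusion–exclusion on the complementary "escape" events. Write $\Prob(\overline{X}_1<v,\ \underline{X}_{\,1}>v-\ld\tal) = 1 - \Prob(\overline{X}_1\geq v) - \Prob(\underline{X}_{\,1}\leq v-\ld\tal) + \Prob(\overline{X}_1\geq v,\ \underline{X}_{\,1}\leq v-\ld\tal)$. Plugging this in,
$$\myhc=\ptal\int_0^{\ld\tal}dv\,\Bigl(1-\Prob(\overline{X}_1\geq v)-\Prob(\underline{X}_{\,1}\leq v-\ld\tal)+\Prob(\overline{X}_1\geq v,\ \underline{X}_{\,1}\leq v-\ld\tal)\Bigr).$$
The first term integrates to $\ptal\cdot\ld\tal=\ld$. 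For the third term I would substitute $u=\ld\tal - v$, which maps $(0,\ld\tal)$ to itself and, using the symmetry $\overline{X}_1\eid -\underline{X}_{\,1}$, turns $\Prob(\underline{X}_{\,1}\leq v-\ld\tal)=\Prob(\underline{X}_{\,1}\leq -u)=\Prob(\overline{X}_1\geq u)$; so this term contributes another copy of $-\ptal\int_0^{\ld\tal}\Prob(\overline{X}_1\geq u)\,du$, accounting for the factor $2$. The last term is exactly $\ptal r^{(\alpha)}(t)$ after the same change of variable $u=\ld\tal-v$ (check: $\overline{X}_1\geq v$ becomes $\overline{X}_1\geq \ld\tal-u$... ) — here I need to be slightly careful and instead substitute so that the remaining integrand reads $\Prob(u\leq\overline{X}_1,\ \underline{X}_{\,1}\leq u-\ld\tal)$; the correct substitution for the cross term is $u=v$ is not quite it, so I would match the two descriptions by using $u\mapsto \ld\tal-u$ symmetry together with $\overline{X}_1\eid-\underline{X}_{\,1}$ to bring it to the stated form. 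Replacing $\geq$ by $\leq$ inside the probabilities is harmless since the supremum and infimum of a stable process have continuous distribution functions (no atoms), so $\Prob(v\leq\overline{X}_1)=\Prob(\overline{X}_1\geq v)$ etc.

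The main obstacle I anticipate is bookkeeping rather than depth: getting the inclusion–exclusion and the change-of-variables signs exactly right so that the two escape terms combine into the clean $-2\ptal\int_0^{\ld\tal}\Prob(u\leq\overline{X}_1)\,du$ and the joint-escape term becomes precisely $\ptal r^{(\alpha)}(t)$ with the stated integrand. A secondary point to address carefully is the identification of $\{\tau^{(\alpha)}_{(v-\ld\tal,v)}>1\}$ with the joint event on $\overline{X}_1,\underline{X}_{\,1}$, including the strict-versus-nonstrict inequality issue and the fact that the relevant probabilities are unchanged by this (which follows from absence of atoms, a consequence of regularity of $0$ for $(0,\infty)$ and for $(-\infty,0)$). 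Finiteness of $\int_0^{\ld\tal}\Prob(u\leq\overline{X}_1)\,du$ over the bounded range $(0,\ld\tal)$ is automatic since the integrand is bounded by $1$, so no appeal to Corollary \ref{mycor}(ii) is needed here; that corollary matters only later when one passes to the limit $t\downarrow 0$.
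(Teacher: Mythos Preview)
Your proposal is correct and follows essentially the same route as the paper: start from Lemma~\ref{lem1}, perform the change of variable $u=(x-a)\tal$, decompose via inclusion--exclusion on the exit/stay event, and use the symmetry $\overline{X}_1\eid-\underline{X}_{\,1}$ to merge the two single-escape integrals into one. The only place you hesitate is the cross term, but no further substitution is needed there: with $u=v$ the integrand is already $\Prob\pthesis{u\leq\overline{X}_1,\,\underline{X}_{\,1}\leq u-\ld\tal}$, which is exactly the definition of $r^{(\alpha)}(t)$.
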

\begin{proof}
Because of Lemma \ref{lem1}, we know that
$$\myhc=\int_{a}^{b}dx\,\Prob\pthesis{\tau^{(\alpha)}_{\myd_t(x)}>1}.$$
Now, the change of variable $x=\ptal u+a$ yields 
$$\myd_t(x)=\pthesis{(x-b)\tal,(x-a)\tal}=(u-\ld \tal,u)$$ and
\begin{align}\label{kd}
\myhc&=\ptal\int_{0}^{\ld \tal}du\,
\Prob\pthesis{\tau^{(\alpha)}_{(u-\ld \tal,u)}>1}\\\nonumber
&=\ld -\,\ptal\int_{0}^{\ld \tal}du\,
\Prob\pthesis{\tau^{(\alpha)}_{(u-\ld \tal,u)}\leq 1}.
\end{align}
The key step towards the desired decomposition is based on the fact that
\begin{flalign*}
&\Prob\pthesis{\tau^{(\alpha)}_{(u-\ld \tal,u)}\leq 1}=\Prob\pthesis{\set{u\leq \overline{X}_{1} }\cup\set{\underline{X}_{\,1}\leq u-\ld\, \tal}}=\\
\nonumber
&\Prob\pthesis{u\leq \overline{X}_{1} }
+\Prob\pthesis{\underline{X}_{\,1}\leq u-\ld\, \tal}-\Prob\pthesis{u\leq \overline{X}_{1},\,\, \underline{X}_{\,1}\leq u-\ld\, \tal}.
\end{flalign*}
Next,  integrate the last expression from $0$ to
$\ld \tal$ and replace it into \eqref{kd}. To finish the proof, it suffices to observe that the change of variable $v=u-\ld \tal$ and the fact $ \overline{X}_{1}\eid
-\underline{X}_{\,1}$ yield
\begin{align*}
\int_{0}^{\ld \tal}du\,\Prob\pthesis{\underline{X}_{\,1}\leq u-\ld\, \tal}&=\int_{-\ld \tal}^{0}dv\,\Prob\pthesis{\underline{X}_{\,1}\leq v}\\ \nonumber
&=\int_{-\ld \tal}^{0}dv\,\Prob\pthesis{-\overline{X}_{\,1}\leq v}=
\int_{0}^{\ld \tal}du\,\Prob\pthesis{u\leq \overline{X}_{\,1}}.
\end{align*}
\end{proof}

\section{proof of theorem \ref{mymth}}\label{sec:T4}
Let us set for $t>0$,
\begin{align}\label{Ldef}
L^{(\alpha)}(t)=\int_{0}^{\ld \tal }du\,\Prob\pthesis{u\leq \overline{X}_1},
\end{align}
and  $$L^{(\alpha)}(0)=\int_{0}^{\infty }du\,\Prob\pthesis{u\leq \overline{X}_1}.$$
According to Lemma 2 in \cite{Chaumont1} (see also \cite{Doney, Chaumont, Darling}), it is known that there exists a strictly positive and continuous density $f_{\alpha}(x)$ defined over $(0, \infty)$ such  that
\begin{align}\label{dsup}
\Prob\pthesis{\overline{ X}_1 \in A}=\int_{A}dx \,f_{\alpha}(x),
\end{align}
for any Borel set $A\subseteq [0,\infty)$.
\begin{rmk}
We point out that $L^{(\alpha)}(0)<\infty$ only for $1<\alpha\leq 2$. To see this, we observe that Proposition \ref{prop1} ensures the finiteness of $L^{(\alpha)}(0)$ for $1<\alpha<2$. Regarding the case $\alpha=2$, $\mypr$ represents a Brownian Motion at twice velocity. It is  well known that for every $u>0$, the following equality holds.
\begin{align} \label{mdbm}
\Prob\pthesis{ \underline{X}_t\leq -u }=\Prob\pthesis{u\leq \overline{X}_t }=  2\, \Prob\pthesis{u\leq X_t}.
\end{align}
 Therefore, we have by interchanging the order of integration that
\begin{align}\label{Bmcal}
L^{(2)}(0)&=2\,\int_{0}^{\infty}du\, \Prob(u\leq X_1)=2\,\int_{0}^{\infty}du\, \int_{u}^{\infty}dz\,p_1^{(2)}(z)\\\nonumber &=2 \int_{0}^{\infty}dz\,z\,p_1^{(2)}(z)=2\,\int_{0}^{\infty}dz \,\frac{z}{\sqrt{4\pi}}\,\exp\pthesis{-\frac{z^2}{4}}=\frac{2}{\sqrt{\pi}}.
\end{align}

It is a basic fact in probability (see \cite{Ross})  that the existence of the density $f_{\alpha}(x)$ and the finiteness of 
$L^{(\alpha)}(0)$
imply that $L^{(\alpha)}(0)=\E{\overline{X}_1}$. 

We also mention that Proposition \ref{prop1} yields 
$\E{X_1,X_1>0}\leq \E{\,\overline{X}_1}\leq 2\,\E{X_1,X_1>0}.$ In addition, it is proved in \cite[page 14]{Acu0}, by subordination of the Gaussian kernel that
$$\E{X_1,X_1>0}=\frac{1}{\pi}\Gamma\pthesis{1-\frac{1}{\alpha}},$$
whenever $1<\alpha\leq 2$.
\end{rmk}

The following proposition provides the small time behavior of $L^{(\alpha)}(t)$ as $\tgo$ with a remainder term as long as $1<\alpha\leq2.$
\begin{prop} \label{p1}
Consider $1<\alpha\leq 2$. Then,
$$L^{(\alpha)}(t)=\E{\overline{X}_1}+\mathcal{O}(t\cdot \mathbbm{1}_{\set{2}}(\alpha)+t^{1-\frac{1}{\alpha}}\cdot \mathbbm{1}_{(1,2)}(\alpha)), \mbox{as}\,\,\,  \tgo.$$ 
In particular,$$\lim\limits_{\tgo}L^{(\alpha)}(t)=\E{\overline{X}_1}.$$

\end{prop}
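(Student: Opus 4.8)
The plan is to estimate the difference $L^{(\alpha)}(0)-L^{(\alpha)}(t)$ by writing it as the tail integral
\[
L^{(\alpha)}(0)-L^{(\alpha)}(t)=\int_{\ld\tal}^{\infty}du\,\Prob\pthesis{u\leq \overline{X}_1},
\]
which is legitimate because the Remark established $L^{(\alpha)}(0)=\E{\overline{X}_1}<\infty$ for $1<\alpha\leq 2$. Since $\ld\tal\to\infty$ as $\tgo$, the finiteness of $L^{(\alpha)}(0)$ already gives the qualitative limit $L^{(\alpha)}(t)\to\E{\overline{X}_1}$; the real content is the \emph{rate}, so I would focus on bounding this tail integral by the announced order.

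For the rate, I would split into the two cases. When $1<\alpha<2$, apply Proposition \ref{prop1} to get $\Prob\pthesis{u\leq\overline{X}_1}\leq 2\,\Prob\pthesis{u\leq X_1}$, and then use the tail estimate from Lemma \ref{lem2}: for $u\geq 1$ (which holds once $t$ is small enough that $\ld\tal\geq 1$), $\Prob\pthesis{u\leq X_1}\leq c_{\alpha,1}\,\tfrac{u^{-\alpha}}{\alpha}$ — this comes from \eqref{Probest} read with $t=1$, or directly from \eqref{tcomp} and the integration in Lemma \ref{lem2}. Hence
\[
\int_{\ld\tal}^{\infty}du\,\Prob\pthesis{u\leq\overline{X}_1}\leq \frac{2c_{\alpha,1}}{\alpha}\int_{\ld\tal}^{\infty}u^{-\alpha}\,du=\frac{2c_{\alpha,1}}{\alpha(\alpha-1)}\,(\ld\tal)^{1-\alpha}=C\,t^{(\alpha-1)/\alpha}=C\,t^{1-\frac{1}{\alpha}},
\]
using $\alpha>1$ for convergence of the $u$-integral. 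That is exactly the $t^{1-\frac{1}{\alpha}}\cdot\mathbbm{1}_{(1,2)}(\alpha)$ term. When $\alpha=2$, I would instead use the explicit Gaussian identity \eqref{mdbm}, $\Prob\pthesis{u\leq\overline{X}_1}=2\,\Prob\pthesis{u\leq X_1}=2\int_u^\infty p_1^{(2)}(z)\,dz$, and bound $\int_u^\infty \frac{1}{\sqrt{4\pi}}e^{-z^2/4}\,dz\leq \frac{1}{\sqrt{4\pi}}\int_u^\infty \frac{z}{u}e^{-z^2/4}\,dz=\frac{1}{u\sqrt{\pi}}e^{-u^2/4}$; integrating $u$ from $\ld\tal=\ld t^{-1/2}$ to $\infty$ gives something of order $e^{-\ld^2/(4t)}$, which is $o(t)$ — in particular $\mathcal{O}(t)$ — matching $t\cdot\mathbbm{1}_{\set{2}}(\alpha)$. (One could even state a stronger remainder here, but $\mathcal{O}(t)$ suffices for the theorem.)

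The main obstacle is purely bookkeeping with the regime of validity: the tail bound from Lemma \ref{lem2} that I want, $\Prob\pthesis{u\leq X_1}\leq c_{\alpha,1}u^{-\alpha}/\alpha$, requires $u\geq 1$, so all the estimates above are only valid once $t$ is small enough that $\ld\tal\geq 1$; for larger $t$ the statement is vacuous since it is an asymptotic as $\tgo$. I would therefore begin the proof by fixing $t_0>0$ with $\ld\,t_0^{-1/\alpha}\geq 1$ and carrying out the computation for $0<t<t_0$. No deeper difficulty is expected — everything reduces to the two-sided heat-kernel bounds already recorded and the comparison Proposition \ref{prop1}, plus, in the Gaussian case, the classical Mills-ratio-type estimate.
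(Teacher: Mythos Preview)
Your proposal is correct and follows essentially the same approach as the paper: write $\E{\overline{X}_1}-L^{(\alpha)}(t)$ as the tail integral $\int_{\ld\tal}^\infty \Prob(u\leq\overline{X}_1)\,du$, control it via Proposition~\ref{prop1} by $2\int_{\ld\tal}^\infty \Prob(u\leq X_1)\,du$, and then for $1<\alpha<2$ invoke the bound \eqref{Probest} (valid once $\ld\tal\geq 1$) to integrate $u^{-\alpha}$, while for $\alpha=2$ use the explicit Gaussian kernel. The only cosmetic difference is in the Gaussian case: the paper swaps the order of integration to compute $\int_{\ld t^{-1/2}}^\infty\Prob(u\leq X_1)\,du$ exactly as $\pi^{-1/2}e^{-\ld^2/(4t)}$ minus a nonnegative term and then applies $e^{-x}\leq x^{-1}$, whereas you bound the inner tail by a Mills-ratio estimate first; both routes yield an exponentially small remainder and hence $\mathcal{O}(t)$.
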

\begin{proof} It is clear by \eqref{Ldef} and remark \ref{dsup} that
$$L^{(\alpha)}(t)=\E{\overline{X}_1}-\int_{\ld \tal}^{\infty}du\,\Prob(u\leq \overline{X}_1).$$
Now, by Proposition \ref{prop1}, we have
$$\int_{\ld \tal}^{\infty}du\,\Prob(u\leq \overline{X}_1)\leq 2\,\int_{\ld \tal}^{\infty}du\,\Prob(u\leq X_1).$$
Let us denote the integral term at the right hand side of the previous inequality by $R^{(\alpha)}(t)$. To complete the proof of the proposition, it suffices to estimate how fast $R^{(\alpha)}(t)$ tends to $0$ as $\tgo$.
We  proceed to consider cases.

\hspace{20mm}
\item[{\bf Case $\alpha=2$}:] By interchanging the order of integration, we obtain
\begin{align*}
R^{(2)}(t)&=\int_{\ld t^{-\frac{1}{2}}}^{\infty}du \int_{u}^{\infty}dz\,p_1^{(2)}(z)
=\int_{\ld t^{-\frac{1}{2}}}^{\infty}dz\,p_1^{(2)}(z)
\int_{\ld t^{-\frac{1}{2}}}^{z}du\\
&=\E{X_1, \ld t^{-\inalp}< X_1 }-\ld t^{-\inalp}\,
\Prob \pthesis{\ld t^{-\inalp}< X_1}.
\end{align*}

Let us denote 
\begin{align}\label{rm}
j(t)=\E{X_1, \ld t^{-\inalp}< X_1 }
\end{align}
and observe that
\begin{align*}
j(t)\geq \ld t^{-\inalp}\,\Prob\pthesis{\ld t^{-\inalp}< X_1}.
\end{align*}
Thus, the remainder function $R^{(2)}(t)$ satisfies $0\leq R^{(2)}(t)\leq 2\,j(t).$ 
It follows from \eqref{rm} that  
\begin{align*}
j(t)&=(4\pi)^{-1/2}
\int_{ \ld t^{-1/2}}^{\infty}dz\,z\exp\left(-\frac{z^2}{4}\right)=\pi^{-1/2}\exp\left(-\frac{\ld^2}{4t}\right).
\end{align*}

Next, by applying the elementary inequality 
\begin{align*}
\exp(-x)\leq x^{-1},\,\, x>0,
\end{align*}
we conclude that $j(t)\leq 4\pi^{-1/2}\ld^{-2}t.$  
\bigskip

\item[{\bf Case $1<\alpha< 2$:}]
By inequality \eqref{Probest}, we have for $\ld \tal >1$ that 
$$0\leq R^{(\alpha)}(t)\leq \frac{c_{\alpha,1}}{\alpha}\int_{\ld \tal}^{\infty}du\,u^{-\alpha}=\pthesis{\frac{c_{\alpha,1}\ld^{1-\alpha}}{\alpha(\alpha-1)}}t^{1-\frac{1}{\alpha}}.$$ 
\end{proof}

The last ingredient in the proof of part $(a)$ of our main result is to show that $r^{(\alpha)}(t)\rightarrow 0$ as $\tgo$.

\begin{prop} \label{p2}
Consider $1<\alpha\leq 2$ and
$$r^{(\alpha)}(t)=\int_{0}^{\ld \tal}du\,
\Prob\pthesis{u\leq \overline{X}_{1},\underline{X}_{\,1}\leq u-\ld\, \tal}.$$
Then,
\begin{align*}
\lim\limits_{\tgo}r^{(\alpha)}(t)=0.
\end{align*}
\end{prop}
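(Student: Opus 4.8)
\textbf{Plan for the proof of Proposition \ref{p2}.} The plan is to bound $r^{(\alpha)}(t)$ above by a quantity that manifestly vanishes as $\tgo$, exploiting the fact that the event $\set{u\leq \overline{X}_1,\ \underline{X}_{\,1}\leq u-\ld\tal}$ forces the range $\overline{X}_1-\underline{X}_{\,1}$ of the process on $[0,1]$ to be at least $\ld\tal$, which is large when $t$ is small. First I would observe that on this event we have $\overline{X}_1\geq u$ and $\overline{X}_1\geq u-\underline{X}_{\,1}\geq \ld\tal - (u - 2u) $; more usefully, for $u$ in the range of integration, either $u\geq \tfrac12\ld\tal$ or $u-\ld\tal\leq -\tfrac12\ld\tal$, so at least one of the two marginal events $\set{\overline{X}_1\geq \tfrac12\ld\tal}$ or $\set{\underline{X}_{\,1}\leq -\tfrac12\ld\tal}$ occurs. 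Hence
\begin{align*}
\Prob\pthesis{u\leq \overline{X}_1,\ \underline{X}_{\,1}\leq u-\ld\tal}\leq \Prob\pthesis{\tfrac12\ld\tal\leq \overline{X}_1} + \Prob\pthesis{\underline{X}_{\,1}\leq -\tfrac12\ld\tal} = 2\,\Prob\pthesis{\tfrac12\ld\tal\leq \overline{X}_1},
\end{align*}
using $\overline{X}_1\eid -\underline{X}_{\,1}$.

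This crude bound alone gives $r^{(\alpha)}(t)\leq 2\,\ld\tal\,\Prob(\tfrac12\ld\tal\leq \overline{X}_1)$, and by Proposition \ref{prop1} together with Lemma \ref{lem2} (estimate \eqref{Probest} with $u=\tfrac12\ld\tal$, $t=1$, valid since $1<\tfrac12\ld\tal$ for small $t$) we get $\Prob(\tfrac12\ld\tal\leq \overline{X}_1)\leq 2c_{\alpha,1}\alpha^{-1}(\tfrac12\ld\tal)^{-\alpha} = C\, t$. Therefore $r^{(\alpha)}(t)\leq C'\,\tal\cdot t = C'\,t^{1-\frac1\alpha}\to 0$ for $1<\alpha\leq 2$, which is exactly the claim. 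Actually for sharper control one can split the integral: for $u\in(0,\tfrac12\ld\tal)$ use the marginal $\Prob(\underline{X}_{\,1}\leq u-\ld\tal)\leq \Prob(\underline{X}_{\,1}\leq -\tfrac12\ld\tal)$, and for $u\in(\tfrac12\ld\tal,\ld\tal)$ use $\Prob(u\leq\overline{X}_1)$, but the two-term union bound above already suffices.

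The only mild subtlety, and the place I expect to spend the most care, is handling the boundary case $\alpha=2$ separately since Lemma \ref{lem2} and \eqref{Probest} are stated for $0<\alpha<2$; there I would instead invoke \eqref{mdbm} and the Gaussian tail bound (as in the computation of $j(t)$ in the proof of Proposition \ref{p1}), yielding $\Prob(\tfrac12\ld\tal\leq\overline{X}_1)=2\,\Prob(\tfrac12\ld t^{-1/2}\leq X_1)\leq C\exp(-\ld^2/(16t))$, which decays faster than any power of $t$, so $r^{(2)}(t)\leq C\,t^{-1/2}e^{-\ld^2/(16t)}\to 0$. Assembling the two cases completes the proof; no genuine obstacle arises, as the geometry of the range event does all the work.
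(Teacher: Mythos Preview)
Your argument is correct, but it proceeds quite differently from the paper's. The paper does not split the range of $u$ at the midpoint nor invoke the tail estimate \eqref{Probest}; instead it observes that the integrand
\[
G_{\alpha}(t,u)=\mathbbm{1}_{(0,\ld\tal)}(u)\,\Prob\!\pthesis{u\leq \overline{X}_1,\ \underline{X}_{\,1}\leq u-\ld\tal}
\]
is dominated by the integrable function $u\mapsto\Prob(u\leq\overline{X}_1)\in L^{1}((0,\infty))$ (this integrability is exactly where the hypothesis $1<\alpha\leq 2$ enters, via Corollary~\ref{mycor} and \eqref{Bmcal}), while the other marginal bound $G_{\alpha}(t,u)\leq \Prob(\underline{X}_{\,1}\leq u-\ld\tal)\to\Prob(\underline{X}_{\,1}=-\infty)=0$ gives pointwise convergence to zero. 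A single application of the Dominated Convergence Theorem then finishes the proof, with no case distinction between $\alpha<2$ and $\alpha=2$.

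Your route, by contrast, is more quantitative: it yields the explicit rate $r^{(\alpha)}(t)=\mathcal{O}(t^{1-1/\alpha})$ for $1<\alpha<2$ and a super-polynomial rate for $\alpha=2$, at the cost of having to treat the Gaussian case separately (since \eqref{Probest} is stated only for $0<\alpha<2$). The paper's DCT argument is shorter and unified across $1<\alpha\leq 2$, but gives no rate. Both are valid; if a rate on $r^{(\alpha)}(t)$ were ever needed---say to sharpen the remainder in part~(a) of Theorem~\ref{mymth}---your approach would be the natural one to use.
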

\begin{proof}
Define $$G_{\alpha}(t,u)=\mathbbm{1}_{\pthesis{0, \ld \tal}}(u)\cdot\Prob\pthesis{u\leq \overline{X}_{1},\,\underline{X}_{\,1}\leq u-\ld\, t^{-\frac{1}{\alpha}}},$$ which satisfies that
$$G_{\alpha}(t,u)\leq \Prob\pthesis{u\leq \overline{X}_{1}},$$ where according to  Corollary \ref{mycor} (for $1<\alpha<2$) and equality \eqref{Bmcal} (for $\alpha=2$), we have
$ \Prob\pthesis{u\leq \overline{X}_{1}}\in L^{1}((0,\infty)).$ Furthermore, $$G_{\alpha}(t,u)\leq \Prob\pthesis{\underline{X}_{\,1}\leq u-\ld\, t^{-\frac{1}{\alpha}}},$$
which in turn implies by appealing once more to Corollary \ref{mycor}  and equality \eqref{mdbm} that 
$$\lim\limits_{\tgo}G_{\alpha}(t,u)\leq \Prob\pthesis{\underline{X}_{\,1}=-\infty}=0.$$
Thus, the result follows from an application of the Dominated Convergence Theorem.
\end{proof}

{\bf Proof of part $a)$ of Theorem \ref{mymth}:} 
\begin{proof}

Let $1<\alpha\leq 2$. By Proposition \ref{ml}, we obtain 
\begin{align*}
\lim\limits_{\tgo}\frac{\abs{\myd}-\myhc}{\ptal}=\lim\limits_{\tgo}2\,L^{(\alpha)}(t)-r^{(\alpha)}(t).
\end{align*}
Thus, the desired result follows from Proposition \ref{p1} and  Proposition \ref{p2}.
\end{proof} 

As for the  rest of the proof of our main result,  the following proposition concerning how fast $L^{(\alpha)}(t)$ diverges to $\infty$ as $\tgo$ when $0<\alpha\leq 1$ will be a crucial ingredient. It is remarkable that the behavior  of $L^{(\alpha)}(t)$ for small values of $t$ has been deduced from the function $\myH_{\Omega,\Omega^c}^{(\alpha)}(t)$ given in \eqref{hc} and the limits described in \eqref{case2} and \eqref{case3}.

\begin{prop}\label{propfinal}
\hspace{20mm}
\begin{enumerate}
\item[$(i)$] Let $\alpha=1$. Then, $$\lim\limits_{\tgo}\frac{L^{(1)}(t)}{\ln\pthesis{\frac{1}{t}}}=\frac{1}{\pi}.$$
\item[$(ii)$] For $0<\alpha<1$, we have
$$\lim\limits_{\tgo}\frac{\ptal\,L^{(\alpha)}(t)}{t}=\frac{1}{2}A_{\alpha, 1}\myP_{\alpha}(\Omega).$$
Here,  $A_{\alpha,1}$ and $\myP_{\alpha}(\Omega)$ are the constants defined in \eqref{Adef} and \eqref{per}, respectively.
\end{enumerate}
\end{prop}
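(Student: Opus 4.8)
The plan is to connect $L^{(\alpha)}(t)$ directly to the function $\myH_{\Omega,\Omega^c}^{(\alpha)}(t)$ studied in \cite{Acu0}, whose small-time behaviour is already recorded in \eqref{case2} and \eqref{case3}. First I would rewrite $\myH_{\Omega,\Omega^c}^{(\alpha)}(t)$ using the same scaling-and-translation manipulation that produced Proposition \ref{ml}. Concretely, $\myH_{\Omega,\Omega^c}^{(\alpha)}(t)=\int_\Omega dx\,\Prob^x(X_t\notin\Omega)$, and after the change of variables $x=\ptal u+a$ and $X_t\eid \ptal X_1$ this becomes $\ptal\int_0^{\ld\tal}du\,\Prob(X_1\le u-\ld\tal \text{ or } X_1\ge u)=\ptal\int_0^{\ld\tal}du\,[\Prob(u\le X_1)+\Prob(-\overline{X}\text{-type tail})]$; using $X_1\eid -X_1$ and the substitution $v=u-\ld\tal$ one gets, exactly as in the proof of Proposition \ref{ml}, that
$$\myH_{\Omega,\Omega^c}^{(\alpha)}(t)=2\,\ptal\int_0^{\ld\tal}du\,\Prob(u\le X_1)=2\,\ptal\,\widetilde{L}^{(\alpha)}(t),$$
where $\widetilde{L}^{(\alpha)}(t):=\int_0^{\ld\tal}du\,\Prob(u\le X_1)$ is the exact analogue of $L^{(\alpha)}(t)$ with $\overline{X}_1$ replaced by $X_1$.

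The second step is to control the difference $L^{(\alpha)}(t)-\widetilde L^{(\alpha)}(t)=\int_0^{\ld\tal}du\,[\Prob(u\le\overline X_1)-\Prob(u\le X_1)]$. By Proposition \ref{prop1} the integrand is nonnegative and bounded by $\Prob(u\le X_1)$, and more importantly it is bounded by $\Prob(u\le\overline X_1)-\Prob(u\le X_1)\le 2\Prob(u\le X_1)-\Prob(u\le X_1)=\Prob(u\le X_1)$; but the gain I actually need is that this difference is \emph{integrable on all of $(0,\infty)$} or at least grows strictly slower than the main term. For $0<\alpha<1$ the main term $\ptal\widetilde L^{(\alpha)}(t)$ is of order $t$, so I need the correction to be $o(t/\ptal)=o(t^{1-1/\alpha})$; here I would argue via Proposition \ref{prop1} that $0\le L^{(\alpha)}(t)-\widetilde L^{(\alpha)}(t)\le \widetilde L^{(\alpha)}(t)$, which is only $O(t^{1-1/\alpha})$ — not good enough — so instead I would bound the difference by $\int_0^{\ld\tal}du\,\Prob(u\le\overline X_1,\,X_1<u)$ and exploit that the event $\{u\le\overline X_1,\,X_1<u\}$ forces the path to have exited and returned, giving extra decay; alternatively, and more cleanly, I would go the other direction and use Proposition \ref{ml} together with \eqref{ineq1}: since $\myH_{\Omega,\Omega^c}^{(\alpha)}(t)\le\ld-\myhc = 2\ptal L^{(\alpha)}(t)-\ptal r^{(\alpha)}(t)$ and $r^{(\alpha)}(t)\ge 0$, we get $\widetilde L^{(\alpha)}(t)=\myH_{\Omega,\Omega^c}^{(\alpha)}(t)/(2\ptal)\le L^{(\alpha)}(t)$, matching the obvious inequality $\widetilde L\le L$ from Proposition \ref{prop1}; for the reverse, I would show $2\ptal L^{(\alpha)}(t)\le \myH_{\Omega,\Omega^c}^{(\alpha)}(t)+\ptal r^{(\alpha)}(t)$ and that $\ptal r^{(\alpha)}(t)$ is negligible compared to the respective main orders ($t\ln(1/t)$ when $\alpha=1$, $t$ when $\alpha<1$), using the domination bound $G_\alpha(t,u)\le\min\{\Prob(u\le\overline X_1),\Prob(\underline X_1\le u-\ld\tal)\}$ from the proof of Proposition \ref{p2} to estimate $r^{(\alpha)}(t)$ by an integral that is uniformly small. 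I expect this estimate of $\ptal r^{(\alpha)}(t)$ — showing it is genuinely lower-order in the regimes $0<\alpha\le1$, where $L^{(\alpha)}(t)$ itself blows up — to be the main obstacle, since the crude $L^1$-domination used in Proposition \ref{p2} only gives $r^{(\alpha)}(t)=o(1)$, whereas now I need $\ptal r^{(\alpha)}(t)=o(t)$ for $\alpha<1$ and $\ptal r^{(1)}(t)=t\,r^{(1)}(t)=o(t\ln(1/t))$, i.e. merely $r^{(\alpha)}(t)=o(1)$ again suffices for $\alpha=1$ but the $\alpha<1$ case requires a rate.

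Once the equivalence $2\ptal L^{(\alpha)}(t)\sim \myH_{\Omega,\Omega^c}^{(\alpha)}(t)$ (up to negligible terms) is established, the conclusion is immediate: dividing \eqref{case2} by $2$ gives, for $\alpha=1$,
$$\lim_{\tgo}\frac{L^{(1)}(t)}{\ln(1/t)}=\lim_{\tgo}\frac{\myH^{(1)}_{\Omega,\Omega^c}(t)}{2t\ln(1/t)}=\frac{1}{2}\cdot\frac{2}{\pi}=\frac{1}{\pi},$$
and dividing \eqref{case3} by $2$ gives, for $0<\alpha<1$,
$$\lim_{\tgo}\frac{\ptal L^{(\alpha)}(t)}{t}=\lim_{\tgo}\frac{\myH^{(\alpha)}_{\Omega,\Omega^c}(t)}{2t}=\frac{1}{2}A_{\alpha,1}\myP_\alpha(\Omega),$$
which are exactly statements $(i)$ and $(ii)$. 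So the whole proof reduces to the scaling identity of step one (routine, mirroring Proposition \ref{ml}) plus the negligibility of the cross term $\ptal r^{(\alpha)}(t)$ of step two, and I would organize the write-up around first proving $\myH_{\Omega,\Omega^c}^{(\alpha)}(t)=2\ptal\widetilde L^{(\alpha)}(t)$, then proving $0\le L^{(\alpha)}(t)-\widetilde L^{(\alpha)}(t)\le \tfrac12 r^{(\alpha)}(t)$ or a comparable bound, then invoking the required decay of $r^{(\alpha)}(t)$.
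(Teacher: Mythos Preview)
Your identity $\myH_{\Omega,\Omega^c}^{(\alpha)}(t)=2\,\ptal\,\widetilde L^{(\alpha)}(t)$ is correct and is a clean observation. The gap is exactly where you flag it: you never succeed in showing $L^{(\alpha)}(t)\sim\widetilde L^{(\alpha)}(t)$. Proposition~\ref{prop1} only gives $\widetilde L^{(\alpha)}(t)\le L^{(\alpha)}(t)\le 2\,\widetilde L^{(\alpha)}(t)$, which pins the limit to an interval but not to a value. Your alternative route through $r^{(\alpha)}(t)$ is circular for the upper bound: from Proposition~\ref{ml} you get $2\,\ptal L^{(\alpha)}(t)=(\ld-\myhc)+\ptal r^{(\alpha)}(t)$, but you have no upper bound on $\ld-\myhc$ in terms of $\myH_{\Omega,\Omega^c}^{(\alpha)}(t)$ (inequality~\eqref{ineq1} goes the wrong way), and the crude estimate $r^{(\alpha)}(t)\le 4\int_{\frac{1}{2}\ld\tal}^{\ld\tal}\Prob(u\le X_1)\,du$ obtained by splitting at the midpoint is only $O(t^{1-1/\alpha})$ when $0<\alpha<1$, i.e.\ the same order as the main term. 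So neither branch of step~2, as written, closes.

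The paper takes a different route that avoids the integral comparison entirely. It applies L'H\^{o}pital to $L^{(\alpha)}(t)/\ln(1/t)$ (case $\alpha=1$) and to $L^{(\alpha)}(t)/t^{1-1/\alpha}$ (case $0<\alpha<1$), which reduces the question to the \emph{pointwise} tail behaviour of $\Prob(\ld\tal\le\overline X_1)$ as $\tgo$. For $\alpha=1$ this is computed from Darling's explicit density $f_1(x)=\frac{1}{\pi(1+x^2)}F(1/x)$ for $\overline X_1$; a second application of L'H\^{o}pital and the fact $F(0)=1$ give $1/\pi$. For $0<\alpha<1$ the paper invokes Bertoin's result (Proposition~4 in \cite[p.~221]{Ber}) that $\Prob(\lambda\le\overline X_1)/\Prob(\lambda\le X_1)\to 1$ as $\lambda\to\infty$, then uses \eqref{tlim} and the explicit value \eqref{Perexp} of $\myP_\alpha(\Omega)$. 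Note that Bertoin's ratio result is precisely the missing ingredient you would need to salvage your approach: it upgrades Proposition~\ref{prop1}'s factor $2$ to an asymptotic equality, and with it one can indeed show $L^{(\alpha)}(t)-\widetilde L^{(\alpha)}(t)=o(\widetilde L^{(\alpha)}(t))$ by splitting the integral at a large fixed $M$. But you do not cite or derive any such sharpening, so as it stands the proposal is incomplete.
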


In order to make this proof as clear as possible, we first proceed to complete the proof of the Theorem \ref{mymth} and we postpone the proof of Proposition \ref{propfinal} until the last section.
\\

{\bf Proof of  part $(b)$ and $(c)$ of Theorem \ref{mymth}}
\begin{proof}
To begin with, we notice that by the inequality \eqref{ineq1} and Proposition \ref{ml}, we arrive at the following inequality.
 \begin{align}\label{fineq}
\myH_{\Omega,\Omega^c}^{(\alpha)}(t)\leq |\Omega|-\myhc
\leq  2\,\ptal\,L^{(\alpha)}(t), 
\end{align}
with $\myhc$ and $L^{(\alpha)}(t)$ as given in  \eqref{shc} and \eqref{Ldef}, respectively. Therefore, the   desired result is obtained by applying the Squeeze Theorem to the inequality \eqref{fineq}  since by \eqref{case2}, \eqref{case3} and Proposition \ref{propfinal}, we have

\begin{align*}
\lim\limits_{\tgo}\frac{\myH_{\Omega, \Omega^c}^{(1)}(t)}{t\ln\pthesis{\frac{1}{t}}}&=\lim\limits_{\tgo}\frac{2\,t\,L^{(1)}(t)}{t\,\ln\pthesis{\frac{1}{t}}}=\frac{2}{\pi},\\
\lim\limits_{\tgo}\frac{\myH_{\Omega, \Omega^c}^{(\alpha)}(t)}{t}&=\lim\limits_{\tgo}\frac{2\,\ptal\,L^{(\alpha)}(t)}{t}=A_{\alpha, 1}\myP_{\alpha}(\Omega), \mbox{\,\,for\,\,} 0<\alpha<1.
\end{align*}
\end{proof}

\section{proof of proposition \ref{propfinal}}\label{sec:T5}

{\bf Case $\alpha=1$:} For this case, $\bf X$ is called Cauchy process.
It was proved by Darling in \cite{Darling} that the density of the supremum corresponding to the Cauchy process (see 
\eqref{dsup}) is given by
\begin{align*}
f_1(x)=\frac{1}{\pi(1+x^2)}F\pthesis{\frac{1}{x}},\,\,\, x>0,
\end{align*}
where 
$$F\pthesis{z}
=\exp\pthesis{\frac{1}{\pi}\int_{0}^{\infty}\ln\pthesis{z+y}\cdot
\frac{dy}{1+y^2}}.$$

\begin{rmk}\label{imprmk}
An elementary calculation shows that for $x>0$
$$\int_{0}^{x}\ln(y)\cdot\frac{dy}{
1+y^2}=-\int_{\frac{1}{x}}^{\infty}\ln(y)\cdot\frac{dy}{
1+y^2},
$$
which in turn implies by taking $x=1$ that
$\int_{0}^{\infty}\ln(y)\cdot\frac{dy}{
1+y^2}=0$ and $F(0)=1$. 
\end{rmk}

We have then
\begin{align*}
L^{(1)}(t)=\int_{0}^{\ld t^{-1} }du\,\Prob\pthesis{u\leq \overline{X}_1}=\int_{0}^{\ld t^{-1} }du\int_{u}^{\infty}dx\,f_1(x).
\end{align*}

{\bf Proof of part $(i)$ of Proposition \ref{propfinal}:}
\begin{proof}
We start by observing that due to the  Fundamental Theorem of Calculus and the facts that $\Prob\pthesis{\ld t^{-1}\leq \overline{X}_1}\rightarrow 0$ and  $L^{(1)}(t)\rightarrow \infty$ as $\tgo$ (see Corollary \ref{Probest}), we obtain by   applying  L'H\^{o}pital's rule twice that
\begin{align*}
\lim\limits_{\tgo}\frac{L^{(1)}(t)}{\ln\pthesis{\frac{1}{t}}}&=\lim\limits_{\tgo}\frac{\ld\,\Prob\pthesis{\ld t^{-1}\leq \overline{X}_1}}{t}=\lim\limits_{\tgo}\ld^{2}\,t^{-2}\,f_1(\ld\,t^{-1})\\
&=\lim\limits_{\tgo}\frac{\ld^2\,F\pthesis{t\ld^{-1}}}{\pi(t^2+\ld^2)}=\frac{F(0)}{\pi}=\frac{1}{\pi}.
\end{align*}
In the last equality above, we have used  according to 
remark \ref{imprmk} that $F(0)=1$.
\end{proof}

{\bf Proof of part $(ii)$ of Proposition \ref{propfinal}:}
\begin{proof}
Along the proof, we assume $0<\alpha<1$. For  $\Omega=(a,b)$ with $\ld =b-a<\infty$, the fractional $\alpha$-perimeter $\mathcal{P}_{\alpha}\pthesis{\Omega}$  defined in \eqref{per}
is equal by  simple integrations to
\begin{align}\label{Perexp}
\myP_{\alpha}\pthesis{\Omega}&=
\int_{\Omega}\int_{\Omega^c}\frac{dx\,\,dy}{|x-y|^{1+\alpha}}= 
\int_{a}^{b}dx\pthesis{\int_{-\infty}^{a}\frac{dy}{(x-y)^{1+\alpha}}+\int_{b}^{\infty}\frac{dy}{(y-x)^{1+\alpha}}}\\ \nonumber
&=\frac{1}{\alpha}\int_{a}^{b}dx\pthesis{(x-a)^{-\alpha}+(b-x)^{-\alpha}}=
\frac{2}{\alpha(1-\alpha)}\ld^{1-\alpha}.
\end{align}

Now, for $0<\alpha<1$, we know that $L^{(\alpha)}(t)$ and $t^{1-\frac{1}{\alpha}}$ both tend to $\infty$ as $\tgo$ by Corollary \ref{Probest}. Thus, we have by   applying  L'H\^{o}pital's rule that
\begin{align}\label{lim1}
\lim\limits_{\tgo}
\frac{t^{\frac{1}{\alpha}}\,L^{(\alpha)}(t)}{t}=
\lim\limits_{\tgo}
\frac{L^{(\alpha)}(t)}{t^{1-\frac{1}{\alpha}}}=
\frac{\ld}{1-\alpha}\lim\limits_{\tgo}
\frac{\Prob\pthesis{\ld\,\tal\leq \overline{X}_{1}}}{t}.
\end{align}

Next, according to  Proposition 4,  in \cite[page 221]{Ber}, it is known that
$$\lim\limits_{\tgo}
\frac{\Prob\pthesis{\ld\,\tal\leq \overline{X}_{1}}}{\Prob\pthesis{\ld\,
\tal \leq X_1}}=1.$$ 
Hence, we deduce that
\begin{align}\label{lim0}
\lim\limits_{\tgo}
\frac{\Prob\pthesis{\ld\,\tal\leq \overline{X}_{1}}}{t}=
\lim\limits_{\tgo}
\frac{\Prob\pthesis{\ld\,\tal\leq X_{1}}}{t},
\end{align}
and by applying L'H\^{o}pital's rule one more time, we arrive at
\begin{align}\label{lim2}
\lim\limits_{\tgo}
\frac{\Prob\pthesis{\ld\,\tal\leq X_{1}}}{t}=\frac{\ld}{\alpha}\lim\limits_{\tgo}t^{-1-\frac{1}{\alpha}}p_1^{(\alpha)}\pthesis{\ld \tal}.
\end{align}
We point out that the transition densities of the one dimensional $\alpha$--stable process ${\bf X}$ are known to satisfy  that $$\palp(x)=\palp(\abs{x})=t^{-\frac{1}{\alpha}} \,p_1^{(\alpha)}(t^{-\frac{1}{\alpha}} \abs{x}),
$$
which in turn implies by \eqref{tlim} that 
\begin{align}\label{lim3}
\lim\limits_{\tgo}t^{-1-\frac{1}{\alpha}}p_1^{(\alpha)}\pthesis{\ld \tal}=\lim\limits_{\tgo}\frac{\palp(\ld)}{t}=
\frac{A_{\alpha,1}}{\ld^{1+\alpha}}.
\end{align}
Then, by combining $\eqref{lim1}$, $\eqref{lim0}$, $\eqref{lim2}$
and $\eqref{lim3}$, we obtain that
$$\lim\limits_{\tgo}
\frac{t^{\frac{1}{\alpha}}\,L^{(\alpha)}(t)}{t}=\frac{A_{\alpha, 1}\ld^{1-\alpha}}{\alpha(1-\alpha)}=\frac{A_{\alpha, 1}\myP_{\alpha}(\myd)}{2},$$
where the last equality is a consequence of \eqref{Perexp} and this completes the proof.
\end{proof}
{\bf Acknowledgements}: I would like to thank the referee whose comments and corrections have improved the quality  of the paper. This  investigation has been supported by Universidad de Costa Rica, project 1563.


\begin{thebibliography}{30} 
\bibitem{Acu0}
L. Acu\~{n}a Valverde, {\it Heat content for stable processes in domains of $\R^d$.} The Journal of Geometric Analysis, DOI 10.1007/s12220-016-9688-9, 1-33, (2016).  

\bibitem{van2} 
M. van den Berg, J. F. le Gall, {\it Mean curvature and heat equation}.  Math Z., {\bf 215}, 437-464, (1994).

\bibitem{van4}
M. van den Berg, {\it Heat content and Brownian motion for some regions
with a fractal boundary}. Probability Theory and Related Fields, {\bf 100}, 439-456, (1994).

\bibitem{vanden3} 
M. van den Berg,  {\it Heat Flow and Perimeter in $\R^m$.} Potential Analysis, {\bf 39}, 369-387,  (2013). 

\bibitem{vandenDryKap} 
M. van den Berg, E. B. Dryden, T. Kappeler, {\it Isospectrality and heat content.} Bulletin of the London Mathematical Society, {\bf46}, 793-808, (2014).

\bibitem{Ber}
J. Bertoin, {\it L\'evy Processes}. Cambridge Tracts in Mathematics, First edition, (1996).

\bibitem{Blum}
R. M. Blumenthal, R. K. Getoor,{ \it Some Theorems on Stable Processes.} Transactions of the American Mathematical Society, {\bf 95}, 263-273, (1960).


\bibitem{Chaumont}
L. Chaumont, {\it On the law of the supremum of L\'evy processes}.
The Annals of Probability,  {\bf 41}, 1191-1217, (2013).

\bibitem{Chaumont1}
L. Chaumont, J. Malecki, {\it On the asymptotic behavior of the density of the supremum of L\'evy processes.}
\href{http://arxiv.org/abs/1310.1587}{http://arxiv.org/abs/1310.1587}, (2013).


\bibitem{Chen1}
Z. Q. Chen, T. Kumagai, {\it Heat kernel estimates for stable-like processes on d-sets.} Stochastic Processes and their Applications,  {\bf 108}, 27-62, (2003).

\bibitem{Darling}
D. A. Darling, {\it The Maximum of Sums of Stable Random Variables.} Transactions of the American Mathematical Society,
{\bf 83}, 164-169, (1956). 

\bibitem{Davies}
 E. B. Davies, {\it Heat kernels and spectral theory}. Cambridge University press, {\bf 92}, (1989).

\bibitem{Doney}%
R. A. Doney, M. S. Savov, {\it The asymptotic
behavior of densities related to the supremum of a stable process.}
The Annals of Probability, {\bf 38}, 316-326, (2010).

\bibitem{Frank}%
R. L. Frank, R. Seiringer, {\it Non--linear ground state 
representations and sharp Hardy--inequalities.} Journal of Functional Analysis, {\bf 255}, 3407-3430, (2008).
 
\bibitem{FracP}%
N. Fusco, V. Millot, M. Morini, {\it A quantitative isoperimetric inequality for fractional perimeters.}
 Journal of Functional Analysis, {\bf 261}, 697-715, (2011).

 \bibitem{Kyp}
 A. E. Kyprianou, {\it Introductory Lectures on Fluctuations of L\'evy Processes with Applications}. Universitext, Springer, (2006).
 
\bibitem{Ross}
S. Ross, {\it  A first course in probability.} Pearson Prentice Hall, 7th  edition, (2006).

\end{thebibliography}
\end{document}